\documentclass[11pt]{amsart}

\usepackage{amssymb}

\oddsidemargin=.7in
\evensidemargin=.7in

\textwidth=6.2in

\addtolength{\oddsidemargin}{-.70in}
\setlength{\evensidemargin}{\oddsidemargin}

\usepackage{hyperref} 


\theoremstyle{definition}

\theoremstyle{remark}

\numberwithin{equation}{section}

\newtheoremstyle{fancy}{}{}{\itshape}{}{\textsc\bgroup}{.\egroup}{ }{}
\newtheoremstyle{fanci}{}{}{\rm}{}{\textsc\bgroup}{.\egroup}{ }{}

\theoremstyle{fancy}
\newcounter{intro}

\numberwithin{equation}{section} \swapnumbers
\newtheorem{cor}[equation]{Corollary}
\newtheorem{lem}[equation]{Lemma}
\newtheorem{prop}[equation]{Proposition}
\newtheorem{thm}[equation]{Theorem}

\newtheorem*{result}{Result}

\theoremstyle{fanci}

\newtheorem{dfn}[equation]{Definition}

\newtheorem{rem}[equation]{Remark}


\newcommand{\mg}{\mathfrak{g}}

\newcommand{\mt}{\mathfrak{t}}



\newcommand{\R}{\mathbb{R}}

\newcommand{\MM}{\mathcal{M}}
\newcommand{\mU}{\mathcal{U}}

\newcommand{\germ}{\mathfrak}

\newcommand{\trace}{\operatorname{Tr}}

\newcommand{\ad}{\operatorname{ad}}
\newcommand{\Ad}{\text{Ad}}
\newcommand{\SO}{\operatorname{SO}}

\newcommand{\Spec}{\operatorname{Spec}}


\newcommand{\vol}{\operatorname{vol}}


\newcommand{\SL}{\operatorname{SL}}
\newcommand{\Id}{\operatorname{Id}}
\newcommand{\MIG}{\MM_{\it{left}}(G)}

\newcommand{\Mat}{\operatorname{Mat}}
\renewcommand{\mU}{\,\mathcal{U}}
\newcommand{\mL}{\mathcal{L}}
\newcommand{\restr}{\upharpoonright}

\begin{document}

\newcommand{\spacing}[1]{\renewcommand{\baselinestretch}{#1}\large\normalsize}
\spacing{1.14}

\title[Spectral Isolation of Bi-Invariant Metrics]{Spectral Isolation of
Bi-invariant Metrics on Compact Lie Groups}


\author[C.S. Gordon]{Carolyn S. Gordon $^\dagger$}
\address{Dartmouth College\\ Department of Mathematics \\ Hanover, NH 03755}
\email{carolyn.s.gordon@dartmouth.edu}
\thanks{$^\dagger$Research partially supported by National Science Foundation
grant DMS 0605247}


\author[D. Schueth]{Dorothee Schueth $^\ddagger$}
\address{Humboldt-Universit\"{a}t \\ Institut f\"{u}r Mathematik \\ Berlin,
Germany}
\email{schueth@math.hu-berlin.de}
\thanks{$^\ddagger$Research partially supported by DFG
Sonderforschungsbereich~647. This work was started during a 
visit of the second author to Dartmouth College,
which she would like to thank for its great hospitality.}

\author[C. J. Sutton]{Craig J. Sutton $^\sharp$}
\address{Dartmouth College\\ Department of Mathematics \\ Hanover, NH 03755}
\email{craig.j.sutton@dartmouth.edu}
\thanks{$^\sharp$Research partially supported by National Science Foundation
grant DMS 0605247}

\subjclass[2000]{53C20, 58J50}
\keywords{Laplacian, eigenvalue spectrum, Lie group, left-invariant metric,
bi-invariant
metric}

\begin{abstract}
We show that a bi-invariant metric on a compact connected Lie group $G$ is
spectrally isolated within the class of left-invariant metrics. In fact, we
prove that given a bi-invariant metric $g_0$ on $G$ there is a positive integer
$N$ such that, within a neighborhood of $g_0$ in the class of left-invariant
metrics of at most the same volume, $g_0$ is uniquely determined by the first
$N$ distinct non-zero eigenvalues of its Laplacian (ignoring multiplicities). In
the case where $G$ is simple, $N$ can be chosen to be two.

\medskip
\noindent
R\'ESUM\'E. Soit $G$ un groupe de Lie compact et connexe, et soit $g_0$ une
m\'etrique bi-invariante sur $G$.
On d\'emontre que $g_0$ est isol\'ee spectralement
dans la classe des m\'etriques invariantes \`a gauche:
Plus pr\'ecis\'ement, il existe un entier positif $N$ tel que, dans un voisinage
de $g_0$ dans la
classe des m\'etriques invariantes \`a gauche et de volume \'egal ou inf\'erieur
\`a
celui de $g_0$, la m\'etrique
$g_0$ est determin\'ee de mani\`ere unique par les $N$ premi\`eres valeurs
propres strictement
positives de son Laplacien (sans multiplicit\'es). Si $G$ est simple, on peut
choisir $N=2$.

\end{abstract}

\maketitle

\section{Introduction}

\noindent
Given a connected closed Riemannian manifold $(M,g)$ its {\bf spectrum}, denoted
$\Spec(M,g)$, is defined to be the sequence of eigenvalues, counted with
multiplicities, of the associated Laplacian $\Delta$ acting on smooth functions. 
Two Riemannian manifolds $(M_{1}, g_{1})$ and $(M_{2}, g_{2})$ are said to 
be {\bf isospectral} if their spectra (counting multiplicities) agree. 
Inverse spectral geometry is the study of the extent to which 
geometric properties of a Riemannian manifold $(M,g)$ are determined by its
spectrum.

A long standing question is whether very special Riemannian manifolds -- e.g., 
manifolds of constant curvature or symmetric spaces -- may be spectrally
distinguishable from other Riemannian manifolds.  The strongest results are for
constant curvature: Tanno  showed that a round sphere of dimension at most six 
is uniquely determined by its spectrum among all \emph{orientable} 
Riemannian manifolds \cite{Tanno}, and in arbitrary dimensions round metrics on
spheres 
are at least spectrally isolated among all Riemannian metrics on spheres
\cite{Tanno2}.  
In contrast, the first and third author have shown that in
dimension $7$ and higher
there are isospectrally deformable metrics on spheres arbitrarily close to the 
standard metric 
\cite{Gordon, Schueth2}.
Hence, for geometries that are in some sense extremely close to being ``nice'' 
or ``ideal'', spectral uniqueness can fail profoundly.  

While many examples
exist of isospectral flat manifolds, Kuwabara \cite{Ku} has proven that flat
metrics are at least spectrally
isolated within the space of all metrics.  However, even the question of whether
a flat torus may be isospectral to a non-flat manifold remains open!  One cannot
resolve
this question by appealing to the heat invariants of a Riemannian manifold as
there are
examples of non-flat manifolds all of whose heat invariants vanish
\cite{Patodi}. 

Outside of the setting of constant curvature, we are not aware of any examples
of Riemannian metrics that are known to be spectrally isolated among arbitrary
Riemannian metrics.  Various results show that \emph{within 
certain classes} of Riemannian metrics, isospectral sets are finite.   Even
here, many of the results involve constant curvature.
For example,  isospectral sets of flat tori are
finite (see \cite{Wolpert} or unpublished work of Kneser) as are isospectral
sets of Riemann surfaces \cite{McKean}. 
As for the class of symmetric spaces, the first and third author have recently
shown that any collection 
of mutually isospectral compact symmetric spaces is finite \cite{GorSut}. 

This article is motivated by the question of whether one can tell from the
spectrum whether a compact Riemannian manifold is symmetric.  Given that this
question has resisted solution even in the case of spheres,  it does not appear
tractable at this time to compare the spectrum of a symmetric space with that of
a completely arbitrary Riemannian  manifold. 
Instead, we ask whether symmetric spaces can be spectrally distinguished within 
a larger class of homogeneous Riemannian manifolds.  

The compact symmetric spaces fall into two types; the type we consider are those
given by bi-invariant Riemannian metrics on compact (not necessarily semisimple) 
Lie groups.   We compare the spectrum of each such symmetric space with the
spectra of arbitrary left-invariant metrics on the Lie group.  As a departure
point we note that the second author showed that there are no non-trivial
continuous
isospectral deformations of a bi-invariant metric within the class of
left-invariant metrics on a compact Lie group \cite{Schueth}.  
This prompts one to ask whether a bi-invariant metric on a compact Lie group 
$G$ is spectrally isolated within the class of left-invariant metrics.  
We give an affirmative answer; in fact we obtain a significantly stronger
result.

Let $\MIG$ denote the set of left-invariant metrics on a Lie group~$G$.
This set can be canonically identified with the set of Euclidean
inner products on the Lie algebra of~$G$. The latter set can in turn be
identified, after some choice of basis, with the set of positive definite
symmetric
$(m\times m)$-matrices, where $m$ is the dimension of~$G$. The
canonical topology on this set of matrices gives rise to a topology on $\MIG$
(independent of the choice of basis), and it is this topology that we consider.
We call a left-invariant metric $g_0$ on~$G$ {\bf spectrally isolated} in $\MIG$
if it is locally spectrally determined within $\MIG$; that is, there is
a neighborhood~$\mU$ of~$g_0$ in $\MIG$ such that no $g\in\mU\setminus\{g_0\}$
is isospectral to~$g_0$. We prove the following:

\begin{result} Let $g_0$ be a bi-invariant metric on a compact Lie group $G$.
\begin{enumerate}
\item There is a neighborhood $\mU$ of $g_0$ in $\MIG$ and a positive integer
$N$ such that if $g$ is any metric in $\mU$ with $\vol(g) \leq \vol(g_0)$ and
whose first $N$
distinct eigenvalues {\rm(}ignoring multiplicities{\rm)} agree with those of
$g_0$, then $g$ is isometric to $g_0$. (See Theorem~\ref{Thm:Main}.)

\item The metric $g_0$ is spectrally isolated in $\MIG$. (See
Corollary~\ref{Cor:Main}.)

\item Let $\alpha_1<\alpha_2<\alpha_3$ be three distinct consecutive eigenvalues
{\rm(}ignoring multiplicities{\rm)} of the associated Laplacian $\Delta_0$.
If $G$ is \emph{simple}, then there exists a neighborhood $\mU$ of~$g_0$ in
$\MIG$ such that if $g \in \mU$
satisfies $\vol(g) \leq \vol(g_0)$ and the condition that three consecutive
distinct eigenvalues of~$\Delta_g$ agree with $\alpha_1$, $ \alpha_2$ and $
\alpha_3$ {\rm(}again ignoring multiplicities{\rm)}, then $g = g_0$.  In
particular,
letting $\alpha_1=0$, then the first two distinct non-zero eigenvalues along
with the
volume bound distinguish $g_0$ withing $\mU$. (See
Theorem~\ref{Thm:SimpleLocRigidity}.)

\end{enumerate}
\end{result}

\noindent
The second result above is immediate from the first since the spectrum of a
compact Riemannian manifold determines its volume.  Hence, within the class of
left-invariant metrics on a compact Lie group $G$, any metric $g\ne g_0$ that is 
isospectral to a bi-invariant metric $g_0$ must be sufficiently far away from
$g_0$.  In contrast, we note that the second author exhibited the first examples
of continuous isospectral families of left-invariant metrics on compact simple
Lie groups \cite{Schueth}; see also \cite{Proctor}.

In light of the fact that most examples of isospectral manifolds in the
literature exploit metrics with ``large'' symmetry groups, the spectral
isolation results above lend strong support to the conjecture that a
bi-invariant metric on a compact Lie group $G$ is spectrally isolated within the
class of \emph{all} metrics on $G$. In fact, these results lead one to speculate
on whether a bi-invariant metric on a \emph{semisimple} Lie group is uniquely
determined by its spectrum.\footnote{We must restrict our attention to
semisimple Lie groups due to the existence of nontrivial pairs of isospectral
flat tori (e.g., \cite{Milnor} and \cite{CS}).} In Section~\ref{Sec:Simple} we
present strong evidence that the bi-invariant metric on a
compact simple Lie group is uniquely determined by its spectrum within the class
of left-invaraint metrics. In particular, we show the following.
 
\begin{result}
Let $g_0$ be a bi-invariant metric on a compact simple Lie group $G$, and let $g
\ne g_0$ be a left-invariant metric on $G$, which is isospectral to $g_0$. Then
there is a constant $C \equiv C(g) >1 $, such that for every subspace $V \leq
L^2(G)$ that is invariant under the right regular action of $G$, we have 
$$\frac{\trace(\Delta_g \upharpoonright V)}{\trace(\Delta_0 \upharpoonright V)}
\equiv C > 1.$$
(See Proposition~\ref{prop:simple} for a more precise statement.)
\end{result}
\noindent
This implies that  if $g \neq g_0 \in \MIG$ is isospectral to the bi-invariant
metric $g_0$, then a very special rearrangement of the eigenvalues must occur. 

The outline of this paper is as follows. In Section~\ref{Sec:Main} we establish
the main results for bi-invariant metrics on arbitrary compact Lie groups. In
Section~\ref{Sec:Simple} we restrict our attention to compact simple Lie groups
to obtain the stronger results in this setting.


\section{Proof of the main result}\label{Sec:Main}

\noindent
Following \cite{LMNR} we introduce the notion of {\bf eigenvalue equivalence},
which is weaker than that of isospectrality. 
The same notion was introduced earlier by Z.I.~Szabo \cite[p.~212]{Szabo}, who
referred
to it as isotonality.  We also define a notion of partial eigenvalue
equivalence.

\begin{dfn}\label{def.eig}
Given  a compact Riemannian manifold $(M,g)$, we define the {\bf eigenvalue set}
of $(M,g)$ to be the ordered collection
of eigenvalues of the associated Laplace operator $\Delta_g$ on functions
on~$M$,
\emph{not} counting multiplicities. We will say that two compact Riemannian
manifolds
are
{\bf eigenvalue equivalent} if their eigenvalue sets coincide.    For $N$ a
positive integer, we will say that two compact Riemannian manifolds
 are {\bf eigenvalue equivalent up to level $N$} if the first $N$ elements of
their eigenvalue sets coincide.
\end{dfn}

\begin{lem}\label{Lem:TraceNbhd}
Let $G$ be a compact Lie group, and let $g_0$ be a bi-invariant metric on~$G$
with associated
Laplacian~$\Delta_0$. Let $V \leq L^2(G)$ be a finite dimensional subspace
which is invariant under the right-regular representation of~$G$ on
$L^2(G)$. Then there exists a positive integer $N$ and a 
neighborhood $\mU$ of $g_0$ in $\MIG$ such that if $g \in \mU$ is eigenvalue
equivalent to $g_0$ up to level $N$, then
$$\Delta_g \upharpoonright V = \Delta_0 \upharpoonright V.$$
\end{lem}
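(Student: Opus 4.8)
The plan is to exploit the Peter--Weyl decomposition $L^2(G)\cong\bigoplus_{\pi\in\hat G}V_\pi\otimes V_\pi^*$ together with the fact that the Laplacian of a left-invariant metric acts ``one block at a time.'' Writing $\Delta_g=-\sum_i X_i^2$ for a $g$-orthonormal basis $\{X_i\}$ of left-invariant vector fields, the left-invariant vector fields act on a matrix coefficient through $d\pi$ on one tensor factor; hence on the $\pi$-block $\Delta_g$ acts as $\rho_\pi(g)\otimes\operatorname{Id}$, where $\rho_\pi(g)=-\sum_i d\pi(X_i)^2\in\operatorname{End}(V_\pi)$ is positive semidefinite and depends continuously (indeed polynomially) on $g$. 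For $g_0$ this operator is the Casimir scalar $\rho_\pi(g_0)=c_\pi\operatorname{Id}$, where $c_\pi$ is the eigenvalue of $\Delta_0$ on the $\pi$-block. Since $V$ is right-invariant it is a sum of pieces $V_\pi\otimes W_\pi$ over the finitely many $\pi$ occurring in it, and $\Delta_g\restr V=\Delta_0\restr V$ if and only if $\rho_\pi(g)=c_\pi\operatorname{Id}$ for each such $\pi$. So the problem reduces to forcing these finitely many operators to be scalar.

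First I would fix the data: let $\mu_0=0<\mu_1<\cdots$ be the distinct eigenvalues of $\Delta_0$ (each $\mu_k$ being the common Casimir value of the $\pi$ in an eigenspace $E_k=\bigoplus_{c_\pi=\mu_k}V_\pi\otimes V_\pi^*$), and let $M$ be the largest index $k$ with $\mu_k=c_\pi$ for some $\pi$ occurring in $V$, so that $V\subseteq E_0\oplus\cdots\oplus E_M$. Each $\Delta_g$ preserves every block, hence every $E_k$, and $\operatorname{spec}(\Delta_g)$ is the union of the spectra $\operatorname{spec}(\Delta_g\restr E_k)$. By continuity of $g\mapsto\rho_\pi(g)$ and finiteness of the $\pi$ with $c_\pi\le\mu_{M+1}$, I would choose $\delta<\tfrac12\min_{0\le k\le M}(\mu_{k+1}-\mu_k)$ and a neighborhood $\mU$ of $g_0$ so small that $\operatorname{spec}(\Delta_g\restr E_k)\subset(\mu_k-\delta,\mu_k+\delta)$ for all $g\in\mU$ and all $k\le M+1$; on these intervals the blocks are pairwise separated and correctly ordered. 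Then set $N=M+2$.

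The core is an induction on $k$ showing that, for $g\in\mU$ eigenvalue equivalent to $g_0$ up to level $N$, one has $\Delta_g\restr E_k=\mu_k\operatorname{Id}$ for $0\le k\le M$. Assuming this for all $j<k$, the blocks $E_0,\dots,E_{k-1}$ contribute exactly the first $k$ distinct eigenvalues $\mu_0,\dots,\mu_{k-1}$ of $\Delta_g$; by eigenvalue equivalence the $(k+1)$-st distinct eigenvalue is $\mu_k$, which must then be the smallest eigenvalue of the next block $E_k$. If $\rho_\pi(g)$ were non-scalar for some $\pi$ with $c_\pi=\mu_k$, then $E_k$ would contribute a second distinct eigenvalue $\nu$ with $\mu_k<\nu<\mu_k+\delta<\mu_{k+1}$; this $\nu$ would be the $(k+2)$-nd distinct eigenvalue of $\Delta_g$, contradicting eigenvalue equivalence, which forces that value to equal $\mu_{k+1}$. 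Hence $E_k$ contributes the single value $\mu_k$, i.e.\ $\rho_\pi(g)=\mu_k\operatorname{Id}$ for every $\pi$ with $c_\pi=\mu_k$. Running the induction up to $k=M$ gives $\rho_\pi(g)=c_\pi\operatorname{Id}$ for all $\pi$ occurring in $V$, whence $\Delta_g\restr V=\Delta_0\restr V$.

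The main obstacle I anticipate is the uniform control needed to justify block separation, since there are infinitely many blocks: I must ensure that for $g\in\mU$ every block $E_k$ with $\mu_k>\mu_{M+1}$ still has all its eigenvalues above $\mu_{M+1}$, so that these high blocks cannot intrude among the first $N$ distinct eigenvalues and corrupt the ordering used above. This comes down to a lower bound $\rho_\pi(g)\ge(1-C\|Q-\operatorname{Id}\|)\,c_\pi\operatorname{Id}$, uniform in $\pi$, where $Q$ is the positive-definite symmetric operator describing $g$ relative to $g_0$; the uniformity rests on the identity $\sum_\alpha d\pi(e_\alpha)^{*}d\pi(e_\alpha)=c_\pi\operatorname{Id}$ for a $g_0$-orthonormal basis $\{e_\alpha\}$, which bounds each $\|d\pi(e_\alpha)\|^2$ by $c_\pi$ and hence controls the perturbation of $\rho_\pi$ by a multiple of $c_\pi$ independent of $\pi$. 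Shrinking $\mU$ accordingly completes the choice of neighborhood.
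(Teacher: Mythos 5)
Your proof is correct, but it takes a genuinely different and more heavily-armed route than the paper's. Both arguments rest on the same basic facts -- right-invariant subspaces are $\Delta_g$-invariant, $\Delta_0$ acts as a scalar on each irreducible (Schur/Casimir), and the finitely many eigenvalues of $\Delta_g$ on a finite-dimensional invariant block vary continuously in $g$ -- but they are deployed very differently. The paper reduces immediately to the case where $V$ is irreducible, writes $\Delta_0\restr V=\lambda\Id_V$ by Schur's Lemma, picks $\epsilon$ with $(\lambda-\epsilon,\lambda+\epsilon)\cap\Spec(\Delta_0)=\{\lambda\}$ and $N$ so that the $N$th element of the eigenvalue set of $\Delta_0$ exceeds $\lambda$, and then closes with one observation that makes your inductive bookkeeping unnecessary: every eigenvalue of $\Delta_g\restr V$ lies in $(\lambda-\epsilon,\lambda+\epsilon)$ by continuity, and since it is an eigenvalue of $\Delta_g$ smaller than the $N$th element of the (shared) eigenvalue set, it must be one of the first $N$ elements of the eigenvalue set of $\Delta_0$; being trapped in the interval, it equals $\lambda$. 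This sidesteps exactly what you correctly identified as the main obstacle in your approach: because your induction pins down the precise \emph{position} of each $\mu_k$ in the ordered spectrum of $\Delta_g$, you must prevent intrusion from the infinitely many high Peter--Weyl blocks, which forces the uniform bound $\rho_\pi(g)\ge(1-C\|Q-\Id\|)c_\pi\Id$. That bound is true, and your sketch of it is sound (each $d\pi(e_\alpha)^*d\pi(e_\alpha)\ge0$ and they sum to the Casimir scalar $c_\pi\Id$, so $\|d\pi(e_\alpha)\|^2\le c_\pi$, whence the perturbation of $\rho_\pi$ is controlled by a multiple of $c_\pi$ independent of $\pi$), so your proof goes through; it even yields a bit more, namely that $\Delta_g$ agrees with $\Delta_0$ on all of $E_0\oplus\cdots\oplus E_M$. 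What the paper's ``sandwich'' argument buys is the realization that no control of the spectral tail is needed at all: one never has to know where the eigenvalues of $\Delta_g\restr V$ sit in the global ordering, only that they lie in an interval which the first $N$ shared eigenvalue-set elements can meet only at $\lambda$.
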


\begin{proof}
First note that $V$, being a finite dimensional subspace of $L^2(G)$ which is
invariant under
the right-regular representation, contains only smooth functions.
Moreover, $V$ is a direct sum of finitely many irreducible
representations of $G$; it is therefore enough to prove the result
in the case that $V$ is irreducible.
For any $g \in \MIG$ and any $g$-orthonormal basis $\{ Y_1, \ldots , Y_n \}$
of the Lie algebra of~$G$,
the associated Laplace operator on smooth functions on~$G$ is given by
$$\Delta_g = - \sum_{j=1}^{n} (\rho_{*}Y_j)^2,$$
where $\rho : G \to U(L^{2}(G))$ is the right-regular representation of~$G$.
Thus, $V$ is invariant under~$\Delta_g$.
Since $g_0$ is bi-invariant, right translations in~$G$ are $g_0$-isometries;
hence $\Delta_0:V\to V$ commutes
with the action of $G$ on~$V$. Irreducibility of~$V$ implies by Schur's Lemma
that $\Delta_0\upharpoonright V$ is a multiple of the identity, say
$\Delta_0\upharpoonright V = \lambda\Id_V$.
We may choose $\epsilon > 0$ such that $(\lambda - \epsilon, \lambda + \epsilon)
\cap
\Spec(\Delta_0) = \{ \lambda \}$.  Choose $N$ large enough so that the $N$th
element of the eigenvalue set is greater than $\lambda$ (and hence greater than
$\lambda+\epsilon$).
The hermitian operators $\Delta_g\upharpoonright V$ on the finite
dimensional vector space~$V$ depend continuously on~$g$. Therefore, their
eigenvalues also depend continuously on~$g$.
Consequently, there is a neighborhood $\mU$ of~$g_0$ in $\MIG$ such that for
each
$g\in\mU$ the eigenvalues of $\Delta_g\upharpoonright V$ must lie
in $(\lambda - \epsilon, \lambda + \epsilon)$.    If
$g \in \mU$ is eigenvalue equivalent to $g_0$ up to level $N$, it follows that
$\Delta \upharpoonright V= \Delta_0 \upharpoonright V = \lambda
\Id_V$.
\end{proof}

We now establish the spectral isolation of bi-invariant metrics
on compact connected Lie groups. In fact, we prove a little more; namely,
we replace the isospectrality condition by the much weaker condition of
partial eigenvalue equivalence together with an upper volume bound.

\begin{thm}\label{Thm:Main}
Let $g_0$ be a bi-invariant metric on a compact connected Lie group~$G$.
Then there is a positive integer $N$, depending only on $g_0$, and a
neighborhood $\mU$ of $g_0$ in $\MIG$ such that if $g \in \mU$
is eigenvalue
equivalent to $g_0$ up to order $N$ and satisfies $\vol(g) \leq \vol(g_0)$, then
$g = g_0$.
\end{thm}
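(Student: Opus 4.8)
The plan is to leverage Lemma~\ref{Lem:TraceNbhd} together with the volume bound to pin down the metric via a trace comparison on a suitably chosen invariant subspace. First I would recall that for the bi-invariant metric $g_0$, the operator $\Delta_0$ acts on each irreducible summand of $L^2(G)$ as a scalar (by Schur's Lemma, as already established), and these scalars are the Casimir eigenvalues determined by $g_0$. The strategy is to pick a finite-dimensional, right-invariant subspace $V \leq L^2(G)$ large enough to ``see'' the full metric, apply Lemma~\ref{Lem:TraceNbhd} to get a positive integer $N$ and a neighborhood $\mU$ on which eigenvalue equivalence up to level $N$ forces $\Delta_g \restr V = \Delta_0 \restr V$, and then extract from this operator equality the conclusion $g = g_0$.

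The key algebraic step is to choose $V$ to be (a sum of copies of) the adjoint representation, or more precisely to work with an irreducible faithful representation so that the restriction of the Laplacian encodes the inner product on $\mg$. Writing $\Delta_g = -\sum_j (\rho_* Y_j)^2$ for a $g$-orthonormal basis $\{Y_j\}$, one sees that $\Delta_g \restr V$ is, up to a constant depending only on the representation, the Casimir-type operator built from the inner product $g$ on $\mg$. Concretely, if $g$ and $g_0$ are related by a positive symmetric endomorphism $A$ of $(\mg, g_0)$ via $g(X,Y) = g_0(AX, Y)$, then $\Delta_g \restr V$ depends on $A$ through $-\sum_j \pi_*(A^{-1/2} e_j)^2$ where $\{e_j\}$ is a $g_0$-orthonormal basis and $\pi$ is the representation. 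The equality $\Delta_g \restr V = \Delta_0 \restr V$ then becomes an equation constraining $A$. The main obstacle is that this equation alone need not force $A = \Id$: it controls $A$ only up to the ambiguity inherited from the representation-theoretic symmetries, so in general it pins down $A$ only up to scaling and possibly an automorphism. This is precisely where the volume bound enters: since $\vol(g) = \vol(g_0) \cdot \det(A)^{-1/2}$, the condition $\vol(g) \leq \vol(g_0)$ translates into $\det A \geq 1$.

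I would therefore combine the trace identity with the determinant inequality as follows. From $\Delta_g \restr V = \Delta_0 \restr V = \lambda \Id_V$ one obtains $\trace(\Delta_g \restr V) = \trace(\Delta_0 \restr V)$, and by expanding the trace of the Casimir-type operator one gets an expression of the form $\trace(\Delta_g \restr V) = c \cdot \trace(A^{-1})$ (for $V$ the adjoint representation, using $\Ad$-invariance of $g_0$ to diagonalize), whereas $\trace(\Delta_0 \restr V) = c \cdot \trace(\Id) = c \cdot m$. Hence $\trace(A^{-1}) = m$, i.e. the eigenvalues $a_1, \dots, a_m > 0$ of $A$ satisfy $\sum_j a_j^{-1} = m$. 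The volume bound gives $\prod_j a_j \geq 1$, equivalently $\prod_j a_j^{-1} \leq 1$. By the arithmetic-geometric mean inequality applied to $\{a_j^{-1}\}$,
\[
1 = \frac{1}{m}\sum_{j=1}^m a_j^{-1} \geq \Bigl(\prod_{j=1}^m a_j^{-1}\Bigr)^{1/m},
\]
which combined with $\prod_j a_j^{-1} \leq 1$ shows equality holds throughout, forcing all $a_j^{-1}$ equal and hence $A = t\,\Id$ for some $t>0$; then $\sum a_j^{-1} = m$ gives $t = 1$, so $A = \Id$ and $g = g_0$.

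The delicate points I expect to require the most care are, first, verifying that the chosen $V$ (and the associated constant $c$) genuinely yields the scalar multiple of $\trace(A^{-1})$ rather than some more complicated polynomial in the eigenvalues of $A$ — this relies on using the adjoint representation and the $\Ad$-invariance of $g_0$ so that $\{e_j\}$ can be taken to simultaneously diagonalize $A$ while respecting the Killing form decomposition. Second, some attention is needed when $G$ is not semisimple, since then $\mg$ has a central summand on which the Killing form degenerates and the adjoint representation is not faithful; there one must augment $V$ with additional representations (or treat the toral factor separately) so that the trace identity still controls all the eigenvalues of $A$. Modulo these representation-theoretic bookkeeping issues, the argument reduces cleanly to the AM--GM step above, and the role of Lemma~\ref{Lem:TraceNbhd} is exactly to convert the hypothesis of eigenvalue equivalence up to level $N$ into the operator equality that launches the computation.
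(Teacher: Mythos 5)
Your strategy is, in substance, the paper's own: realize (copies of) the adjoint representation inside $L^2(G)$, use Lemma~\ref{Lem:TraceNbhd} to convert partial eigenvalue equivalence into the operator equality $\Delta_g\restr V=\Delta_0\restr V$, take traces, and play the resulting identity off against the determinant (volume) bound via AM--GM. Indeed, your identity $\trace(\Delta_g\restr V)=c\,\trace(A^{-1})$ is exactly the paper's identity $n=\|A\|^2+\|R\|^2$ in different coordinates: if $M$ is the change-of-basis matrix from a $g_0$-orthonormal to a $g$-orthonormal basis, then $MM^T=A^{-1}$, so $\trace(A^{-1})=\|M\|^2$. However, there are two problems. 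The first is that your key inequality step fails as written, because of a sign error in the volume formula. In a $g_0$-orthonormal basis the Gram matrix of $g$ is the matrix of $A$, so $\vol(g)=\det(A)^{1/2}\vol(g_0)$, not $\det(A)^{-1/2}\vol(g_0)$; hence $\vol(g)\le\vol(g_0)$ gives $\det A\le 1$, i.e.\ $\prod_j a_j^{-1}\ge 1$. With the direction you state ($\prod_j a_j^{-1}\le 1$), AM--GM gives $1=\mathrm{AM}\ge\mathrm{GM}$ while your volume bound gives $\mathrm{GM}\le 1$: both inequalities point the \emph{same} way, so nothing forces equality (e.g.\ $a_1^{-1}=1/2$, $a_2^{-1}=3/2$ satisfies both constraints with $A\ne\Id$). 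With the corrected direction the sandwich $1=\mathrm{AM}\ge\mathrm{GM}\ge 1$ does hold and your argument goes through; so this error is fixable, but the proof as written breaks precisely at its punchline.

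The second problem is a genuine missing idea: the theorem concerns \emph{all} compact connected $G$, and your argument only works when $G$ is semisimple. If $G$ has a positive-dimensional center, then $\ad_Z=0$ for central $Z$, so any representation controlled through $\ad$ is blind to the central directions: your trace identity constrains only the $\mg_{ss}$-block of $A^{-1}$, and combined with $\det A\le 1$ this cannot force $A=\Id$ (take $A=\Id$ on $\mg_{ss}$ and $A=t\Id$ with $t<1$ on the center; all your constraints hold but $g\ne g_0$). ``Augmenting $V$ with additional representations'' cannot repair this so long as those representations see only $\ad$; one needs functions that are nontrivial on the center, namely characters. This is what the paper's first half supplies: it applies Lemma~\ref{Lem:TraceNbhd} to the spans of finitely many characters pulled back from the quotient torus $\overline{T}=G/G_{ss}$ (the pullback being an eigenfunction because $p:(G,g)\to(\overline{T},\bar g)$ is a Riemannian submersion with minimal fibers), using the dual-lattice vectors $\nu_i$ and $\nu_i+\nu_j$ and polarization to recover the full quotient metric $\bar g$ from finitely many eigenvalues. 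There is then a further subtlety you would have to confront: for a general left-invariant $g$ the complement $\mg_{ss}^{\perp_g}$ is tilted relative to $\mt$, so the characters determine only the quotient metric, not the restriction of $g$ to $\mt$; the paper encodes the tilt in the matrix $R$, and it is exactly the extra term $\|R\|^2$ in the trace identity, together with statement~(\ref{eq.isom}), that kills it. This portion is not representation-theoretic bookkeeping; it is roughly half of the paper's proof, and your proposal does not contain it.
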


\begin{cor}\label{Cor:Main}
Let $g_0$ be a bi-invariant metric on a compact connected Lie group~$G$.
Then $g_0$ is spectrally isolated in $\MIG$.
\end{cor}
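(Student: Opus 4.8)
The plan is to use Lemma~\ref{Lem:TraceNbhd} to reduce the theorem to a statement about a single, cleverly chosen finite-dimensional subspace $V\le L^2(G)$ that is invariant under the right-regular representation, and then to recover the metric from the restricted operator $\Delta_g\restr V$ by a representation-theoretic argument. First I would fix a $g_0$-orthonormal basis $X_1,\dots,X_m$ of the Lie algebra $\mg$ of $G$, so that an arbitrary $g\in\MIG$ is encoded by the positive definite symmetric matrix $A=(g(X_i,X_j))_{ij}$, with $g=g_0$ precisely when $A=\Id$. Choosing the $g$-orthonormal frame $Y_k=\sum_i(A^{-1/2})_{ik}X_i$ and substituting into the formula $\Delta_g=-\sum_k(\rho_*Y_k)^2$ from the proof of Lemma~\ref{Lem:TraceNbhd}, one obtains
$$\Delta_g=-\sum_{i,j}(A^{-1})_{ij}\,(\rho_*X_i)(\rho_*X_j),$$
so that, writing $C=A^{-1}-\Id$,
$$\Delta_0\restr V-\Delta_g\restr V=\bigl(\textstyle\sum_{i,j}C_{ij}(\rho_*X_i)(\rho_*X_j)\bigr)\restr V$$
for every such $V$. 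Thus everything comes down to producing a $V$ on which the right-hand side vanishes only for $C=0$.

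The heart of the matter, and the step I expect to be the main obstacle, is the following purely representation-theoretic claim: there is a finite-dimensional right-invariant subspace $V\le L^2(G)$ such that the linear map
$$C\longmapsto\bigl(\textstyle\sum_{i,j}C_{ij}(\rho_*X_i)(\rho_*X_j)\bigr)\restr V$$
from symmetric $m\times m$ matrices to operators on $V$ is injective. The subtlety is that no \emph{single} small representation will do: for instance, on the defining representation of $\SU(2)$ the anticommutators $\{\rho_*X_i,\rho_*X_j\}$ are all scalar, so this map detects only $\trace C$. To obtain injectivity I would pass through the universal enveloping algebra. Since $C$ is symmetric, $\sum_{ij}C_{ij}(\rho_*X_i)(\rho_*X_j)$ equals the symmetrized expression $\tfrac12\sum_{ij}C_{ij}\{\rho_*X_i,\rho_*X_j\}$, and by the Poincar\'e--Birkhoff--Witt theorem the corresponding degree-two symmetric elements of $\U(\mg)$ are linearly independent. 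Identifying $\U(\mg)$ with the algebra of left-invariant differential operators on the connected group $G$, a nonzero $C$ therefore yields a nonzero differential operator, which by the Peter--Weyl theorem acts nontrivially on some irreducible summand of $L^2(G)$. Hence the intersection, over all irreducibles $V_\pi$, of the kernels of the above maps is $\{0\}$; as these kernels form a decreasing family of subspaces of the finite-dimensional space of symmetric matrices, finitely many irreducibles $V_{\pi_1},\dots,V_{\pi_r}$ already have trivial common kernel, and I take $V=V_{\pi_1}\oplus\dots\oplus V_{\pi_r}$.

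With such a $V$ fixed (it depends only on $g_0$), I would finish by applying Lemma~\ref{Lem:TraceNbhd} to obtain an integer $N$ and a neighborhood $\mU$ of $g_0$ in $\MIG$ such that eigenvalue equivalence up to level $N$ forces $\Delta_g\restr V=\Delta_0\restr V$. By the displayed identity this means $\bigl(\sum_{ij}C_{ij}(\rho_*X_i)(\rho_*X_j)\bigr)\restr V=0$, whence $C=0$ by injectivity, i.e.\ $A^{-1}=\Id$ and $g=g_0$. I note that this argument does not in fact use the volume hypothesis; the bound $\vol(g)\le\vol(g_0)$ (equivalently $\det A\le1$) becomes essential only in the sharper simple-group analysis, where one works with the single number $\trace(\Delta_g\restr V_\pi)$. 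On a simple group this trace sees only $\trace(A^{-1})$, and one then invokes the arithmetic--geometric mean inequality to conclude $A=\Id$ from $\trace(A^{-1})=m$ together with $\det(A^{-1})\ge1$.
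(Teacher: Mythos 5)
Your proposal is essentially correct, and it takes a genuinely different route from the paper's. The paper obtains Corollary~\ref{Cor:Main} in one line from Theorem~\ref{Thm:Main}, since isospectrality implies eigenvalue equivalence and (volume being a heat invariant) equality of volumes; Theorem~\ref{Thm:Main} itself is proved by structure theory: writing $G=G_{ss}T$, the authors recover the induced flat metric on the torus $G/G_{ss}$ from character eigenvalues and polarization, recover the traces $\trace(\Delta_g\restr V_\ell)$ on copies $V_\ell\leq L^2(G)$ of the adjoint representation, and then combine the resulting identity $n=\|A\|^2+\|R\|^2$ with the volume bound $\det(A)\ge1$ and the fact that $\|\cdot\|^2\ge n$ on $\SL(n,\R)$ with equality exactly on $\SO(n)$. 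You bypass both the decomposition $G=G_{ss}T$ and the volume hypothesis: by exploiting the full operator identity $\Delta_g\restr V=\Delta_0\restr V$ furnished by Lemma~\ref{Lem:TraceNbhd} (rather than only traces), together with Poincar\'e--Birkhoff--Witt injectivity of $C\mapsto\sum_{ij}C_{ij}X_iX_j$ into the enveloping algebra, you pin down $A^{-1}=\Id$ directly. Carried out in full, this yields a statement even stronger than Theorem~\ref{Thm:Main}: near $g_0$, partial eigenvalue equivalence alone forces $g=g_0$, with no volume bound at all. Your computation $\Delta_g=-\sum_{i,j}(A^{-1})_{ij}(\rho_*X_i)(\rho_*X_j)$, your $\SU(2)$ remark showing that a single small irreducible cannot suffice, and the finite-dimensionality argument producing finitely many irreducibles $V_{\pi_1},\dots,V_{\pi_r}$ with trivial common kernel are all correct.

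The one step whose justification is thinner than it should be is the claim that a nonzero left-invariant differential operator ``by the Peter--Weyl theorem acts nontrivially on some irreducible summand of $L^2(G)$.'' Peter--Weyl gives $L^2$-density of the span of the representative functions, but $L^2$-density of a subspace does not by itself prevent a nonzero differential operator from annihilating it. What you actually need is that the finite-dimensional representations of $G$ separate points of the universal enveloping algebra $\mathcal{U}(\mg)$, equivalently that the representative functions are dense in the $C^2$-topology. This is true and standard for compact connected $G$, and it can be proved cleanly: every $\Delta_0$-eigenfunction is a representative function (the eigenspaces are finite-dimensional and right-invariant, hence sums of irreducibles), and for smooth $f$ the $\Delta_0$-eigenfunction expansion converges in every Sobolev norm, hence in $C^2$ by Sobolev embedding; so an order-two operator vanishing on all representative functions vanishes identically, and injectivity of $\mathcal{U}(\mg)$ into differential operators then gives your claim. (Alternatively: polynomials in the matrix entries of a faithful finite-dimensional representation are representative functions and realize every $2$-jet at every point of $G$.) With that point repaired, your proof is complete and sound.
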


The corollary follows from the theorem by the fact that isospectrality implies
eigenvalue equivalence and
equality of volumes; in fact, the volume is the first of the classical heat
invariants.

\begin{proof}[Proof of Theorem~\ref{Thm:Main}]
We have $G=G_{ss}T$ where $G_{ss}$ is semisimple, $T$ is a torus and $G_{ss}\cap
T$ is
finite.  The Lie algebra of~$G$ is a direct sum $\mg=\mg_{ss}\oplus \mt$, where
$\mg_{ss}$ and~$\mt$ are the Lie algebras of $G_{ss}$ and~$T$. In particular,
$G_{ss}$ and~$T$ commute, and $\mg_{ss}$ is $g_0$-orthogonal to~$\mt$.

We first claim that there exists a positive integer $N'$ and a
neighborhood~$\mU'$ of~$g_0$ in $\MIG$ such that if
$g\in\mU'$ is
eigenvalue equivalent to~$g_0$ up to level $N'$, then $g$ and~$g_0$, viewed as
inner products on~$\mg$,
induce the same inner product on $\mg/\mg_{ss}$.
By the inner product induced by $g$ we mean the one obtained by identifying
$\mg/\mg_{ss}$
with the $g$-orthogonal complement of $\mg_{ss}$ in~$\mg$.
To prove the claim,
note that the Lie group $\overline{T}:=G/G_{ss}\cong T/(G_{ss}\cap T)$
is a torus which is finitely covered by~$T$.  In particular, the Lie algebra
of~$\overline{T}$ is canonically identified with~$\mt$.  Let $p:G\to
\overline{T}$ be the homomorphic projection.  Given $g\in\MIG$, denote by~$\bar
g$
the induced (flat) metric on~$\overline{T}$ (i.e., the metric
for which $p:(G,g)\to(\overline{T},\bar g)$ becomes a Riemannian submersion).
Let $\mL$ be the lattice in~$\mt$ which is the kernel
of the Lie group exponential map $\mt\to\overline{T}$, and let
$\mL^*\subset\mt^*$
be the dual lattice. For $\mu\in\mL^*$, denote by $\|\mu\|_{\bar g}$ the
norm of~$\mu$ with respect to the dual inner product on~$\mt^*$.
Let $\nu_1,\ldots,\nu_k$ be a basis of~$\mL^*$, where $k=\dim(T)$.
Write $L:=k+\binom k2$, and let $\{\mu_1,\ldots,\mu_L\}$ be the set
containing the vectors $\nu_i$ as well as the $\nu_i+\nu_j$ for $i\ne j$.  Note
that, by polarization, the norm $\|\,.\,\|_{\bar g}$ on~$\mt^*$ -- and
hence~$\bar g$
itself -- is uniquely determined by the
norms of the vectors $\mu_1,\ldots,\mu_L$.
For each $s\in\{1,\ldots,L\}$ let $\bar f_s:\overline{T}\to U(1)$ (where
$U(1)$ is the unitary group of unit complex numbers) be the associated character
of~$\overline{T}$. Then
$\Delta_{\bar g}\bar f_s=4\pi^2\|\mu_s\|_{\bar g}^2
\bar f_s$.  Now $f_s:=\bar f_s\circ p$ is a character
on~$G$. Since the Riemannian submersion $p:G\to\overline{T}$ has minimal
fibers, $f_s$~is an eigenfunction of~$\Delta_g$ with eigenvalue
$4\pi^2\|\mu_s\|_{\bar g}^2$ for each $s=1,\dots, L$. (One can also
verify this fact by direct computation.)

The one-dimensional space
$F_s\leq L^2(G)$ spanned by the character~$f_s$ is invariant under the
right-regular representation.
Let $N'$ be a positive integer and $\mU'$ be a neighborhood of~$g_0$
in~$\MIG$ satisfying the property from Lemma~\ref{Lem:TraceNbhd} with
respect to $F_1\oplus\ldots\oplus F_L$, and let
$g\in\mU'$ be eigenvalue equivalent to~$g_0$ up to level $N'$. Then we must have
$\|\mu_s\|_{\bar g}=\|\mu_s\|_{\bar g_0}$ for each $s=1,\dots, L$.
As remarked above, this implies $\bar g=\bar g_0$. The claim follows.

In the case of the bi-invariant metric $g_0$, the metric $\bar g_0$ on
$\mt$ coincides with the restriction of the inner product $g_0$ to $\mt$ since
$\mg_{ss}$ and~$\mt$ are $g_0$-orthogonal.
However, for more general $g$, one has only that the differential
$p_*:\mg\to\mt$
of the
projection $p:G\to\overline{T}$ restricts to an inner product space isometry
$p_*:(\mg_{ss}^{\perp_g}, g)\to (\mt,\bar{g})$.  In particular, if $g\in
\mU'$, then it follows from the claim that
\begin{equation}\label{eq.isom}
\mbox{the projection from $(\mg_{ss}^{\perp_g},g)$ to 
$(\mt,g_0)$ along $\mg_{ss}$ is an isometry.}
\end{equation}

For the remaining part of the argument, let
$\mg_{ss}=\mg_1\oplus\ldots\oplus\mg_r$ be the decomposition
of~$\mg_{ss}$ into simple Lie subalgebras.
The adjoint representation of~$G$ on~$\germ{g}$,
restricted to the invariant subspace $\germ{g}_\ell$, is an irreducible
representation
of~$G$ for each $\ell=1,\ldots,r$. Note that for $\ell\ne\ell'$ these
representations
of~$G$ are inequivalent even in the case when $\germ{g}_\ell$ and
$\germ{g}_{\ell'}$
happen to be isomorphic as Lie algebras.
By the Peter-Weyl Theorem, every irreducible representation of~$G$ occurs in the
right-regular representation of~$G$ on $L^2(G)$ (with multiplicity equal to its
dimension).
Thus, for each $\ell=1,\ldots, r$ we can choose
a corresponding irreducible subspace $V_\ell\leq L^2(G)$, and the action of $G$
on the subspace $V_1\oplus\ldots\oplus V_r$ of $L^2(G)$ will then be equivalent
to the adjoint representation
of~$G$
acting on~$\mg_{ss}$. 

Let $N''$ be a positive integer and $\mU''$ be a neighborhood of~$g_0$ in $\MIG$
satisfying the property from Lemma~\ref{Lem:TraceNbhd} with respect to
$V_1\oplus\ldots\oplus V_r$. We are going to show that $N:=
\text{max}\{N',N''\}$ and $\mU:=\mU'\cap\mU''$
satisfy the property stated in the Theorem.

If $g$ is any left-invariant metric  on $G$ and $\{U_1,\dots, U_m\}$ is a
$g$-orthonormal basis of $\mg$, then
\begin{equation}\label{eq.tr}
 \trace(\Delta_g \upharpoonright V_{\ell})
=-\sum_{j=1}^m\,\trace\bigl((\ad_{U_j}\restr{\mg_\ell})^2\bigr).
\end{equation}
Since $g_0$ is bi-invariant, $\mg_\ell$ is $g_0$-orthogonal to
$\mg_{\ell'}$ for $\ell\ne\ell'$.  Let $n_{\ell}$ denote the dimension of
$\mg_{\ell}$, and 
let $n=n_1+\ldots+n_r$ be the dimension of~$\mg_{ss}$. Choose a
$g_0$-orthonormal basis
$\{X_1,\ldots,X_n\}$ of~$\mg_{ss}$ such that the first $n_1$ elements lie
in~$\mg_1$, the next $n_2$ elements lie in $\mg_2$, etc.  Complete to a
$g_0$-orthonormal basis $\mathcal{B}_0=\{X_1,\dots,X_n,Z_1,\dots, Z_k\}$, where
(necessarily) $Z_1,\dots, Z_k\in\mt$.

Let $g$ be a metric in~$\mU$ which is eigenvalue equivalent to~$g_0$ up to level
$N$ and satisfies $\vol(g)\leq\vol(g_0)$. 
Since $g\in \mU'$ and $N\geq N'$, statement~(\ref{eq.isom}) holds and
thus there exist elements $W_i\in\mg_{ss}$ such that $\{Z_1+W_1,\dots,
Z_k+W_k\}$ is a $g$-orthonormal basis of $\mg_{ss}^{\perp_g}$.  Complete to a
$g$-orthonormal basis $\mathcal{B}=\{Y_1,\dots, Y_n,Z_1+W_1,\dots, Z_k+W_k\}$ of
$\mg$ with $Y_1,\dots, Y_k\in\mg_{ss}$.  The change of basis matrix which
expresses
the elements of~$\mathcal{B}$ in terms of~$\mathcal{B}_0$ is given by
$$
\begin{bmatrix}A&R\\0&I_k\end{bmatrix},
$$
where $A = (a_{ij}) \in \Mat_{n \times n} (\R)$, $R = (r_{ij}) \in \Mat_{n
\times k}(\R)$ and $I_k$ is the $k \times k$ identity matrix.
Hence, $Y_j=\sum_{i=1}^n a_{ij}X_i$ for $j=1,\ldots,n$ and $W_s=\sum_{i=1}^n
r_{is}X_i$ for $s=1,\ldots,k$.
The condition $\vol(g)\leq\vol(g_0)$ implies that
$|\det(A)|\ge1$.  Without loss of generality we assume $\det(A)>0$ and hence
$\det(A)\ge1$.

Since $g_0$ is bi-invariant, there exist numbers $c_\ell>0$ for
$\ell=1,\ldots,r$
such that the restriction of~$g_0$ to~$\mg_\ell$ coincides with $-c_\ell
B_\ell$, where $B_\ell$
is the Killing form of~$\mg_\ell$ (which in turn coincides with the restriction
to $\mg_\ell$
of the
Killing form $B:(X,Y)\mapsto\trace(\ad_X\circ\ad_Y)$ of $\mg$). In
particular, by equation~(\ref{eq.tr}), we have
$$
\trace(\Delta_0\upharpoonright V_\ell) = \frac{n_\ell}{c_\ell}.
$$
Since $g\in\mU\subset\mU''$ and $N\geq N''$, we have $\Delta_g\upharpoonright
V_\ell
= \Delta_0\upharpoonright V_\ell$ for each $\ell=1,\ldots,r$. In particular, for
$\ell=1$:
\begin{align*}
\frac{n_1}{c_1}  &= \trace(\Delta_0 \upharpoonright V_1)
       =  \trace(\Delta_g \upharpoonright V_1)
       = - \sum_{j=1}^n \trace \bigl((\ad_{Y_j} \upharpoonright \mg_1)^2\bigr)
        - \sum_{s=1}^k \trace \bigl((\ad_{Z_s+W_s} \upharpoonright
\mg_1)^2\bigr)\\
      &= - \sum_{i=1}^{n_1}\sum_{j=1}^n a_{ij}^2\trace
              \bigl((\ad_{X_i}\upharpoonright\mg_1)^2\bigr)
         - \sum_{i=1}^{n_1}\sum_{s=1}^k r_{is}^2\trace
               \bigl((\ad_{X_i}\upharpoonright\mg_1)^2\bigr)\\
      &= \frac1{c_1}\sum_{i=1}^{n_1}\sum_{j=1}^n a_{ij}^2
         +\frac1{c_1}\sum_{i=1}^{n_1}\sum_{s=1}^k r_{is}^2,
\end{align*}
where the fourth equality holds because $\ad_X\upharpoonright \mg_1=0$ for
$X\in\mg_\ell$ with $\ell\ne1$ and because $\{X_1,\ldots,X_{n_1}\}$ is
orthonormal with respect to $-c_1B_1$.
We hence obtain $n_1=\sum_{i=1}^{n_1}\sum_{j=1}^n a_{ij}^2
+\sum_{i=1}^{n_1}\sum_{s=1}^k r_{is}^2$. Summing over the analogous equations
for $\ell=1,\ldots,r$ we conclude that
$$
n=\|A\|^2+\|R\|^2,
$$
where $\|\,.\,\|$ denotes the standard Euclidean norm of matrices viewed as
points in
the appropriate~$\R^N$.
However, $n$ is the minimal value (in fact, the only critical value) of the
function
$\SL(n,\R)\ni C\mapsto\|C\|^2\in\R$ and is attained precisely on $\SO(n)$. It
thus follows
from $\det(A)\ge1$ that $A\in\SO(n)$ and $R=0$; hence $g=g_0$.
\end{proof}

\begin{rem}
Some of the techniques used in this section are similar to those used by Urakawa
in \cite{Urakawa}.
\end{rem}

\section{A stronger spectral isolation result for simple
groups}\label{Sec:Simple}

In the proof of Lemma~\ref{Lem:TraceNbhd} it was observed that if $g$ is a
left-invariant metric on a compact Lie group $G$, with associated Laplacian
$\Delta_g$, then any subspace $V \leq L^{2}(G)$ that is invariant under the
right-regular representation of $G$ is also invariant under $\Delta_g$. With
this in mind we have the following result concerning the trace of the Laplacian
on compact simple Lie groups.

\begin{prop}\label{prop:simple}
Let $g_0$ be a bi-invariant metric on a compact simple Lie group~$G$, and let
$g\ne g_0$ be a left-invariant metric on~$G$ which satisfies $\vol(g) \leq
\vol(g_0)$.  Then there exists a constant $C = C(g) >1$ such that 
$$\trace(\Delta_g
\upharpoonright V)=C\trace(\Delta_0 \upharpoonright V)$$
for every finite dimensional subspace $V\leq L^2(G)$ which is invariant
under the right-regular representation $\rho$
of~$G$ and on which $G$ acts nontrivially.

\end{prop}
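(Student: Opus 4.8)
The plan is to exploit simplicity of $\mg$ to force all the relevant invariant bilinear forms to be proportional to the Killing form, which will make the ratio $\trace(\Delta_g\restr V)/\trace(\Delta_0\restr V)$ independent of $V$, and then to extract $C>1$ from the volume bound. First I would reduce to the case that $V$ is irreducible and nontrivial: any right-invariant $V$ splits as a sum of irreducibles, the trace of each Laplacian is additive over this sum, and on any trivial summand both $\Delta_g$ and $\Delta_0$ act as zero and so contribute nothing to either trace. Recall from the proof of Lemma~\ref{Lem:TraceNbhd} that for a $g$-orthonormal basis $\{U_1,\dots,U_m\}$ of $\mg$ one has $\Delta_g\restr V=-\sum_j(\pi_*U_j)^2$, where $\pi$ is the representation of $G$ on $V$; hence $\trace(\Delta_g\restr V)=\sum_j\beta_V(U_j,U_j)$, where $\beta_V(X,Y):=-\trace(\pi_*X\,\pi_*Y)$ is a symmetric bilinear form on $\mg$.

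The key observation is that $\beta_V$ is $\Ad(G)$-invariant, since $\pi_*(\Ad_h X)=\pi(h)\,\pi_*X\,\pi(h)^{-1}$ and the trace is conjugation-invariant. Because $\mg$ is simple, the space of $\Ad$-invariant symmetric bilinear forms on $\mg$ is one-dimensional, so $\beta_V=\kappa_V(-B)$ for a scalar $\kappa_V$, with $\kappa_V>0$ precisely when $\pi$ is nontrivial ($\beta_V$ being then positive definite, as each $\pi_*X$ is skew-Hermitian). For an inner product $h$ on $\mg$, the sum $\sum_j\beta_V(U_j,U_j)$ over an $h$-orthonormal basis equals the $h$-trace of $\beta_V$; thus $\trace(\Delta_g\restr V)=\kappa_V\trace_g(-B)$ and $\trace(\Delta_0\restr V)=\kappa_V\trace_{g_0}(-B)$. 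The factor $\kappa_V$ cancels, leaving $\trace_g(-B)/\trace_{g_0}(-B)$, which is manifestly independent of $V$; I would take this to be $C$. This proportionality step is the conceptual crux, and it is exactly where simplicity of $G$ enters.

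It then remains to compute $C$ and show $C>1$. Writing $g_0=-cB$ (valid since $g_0$ is bi-invariant and $\mg$ simple) gives $\trace_{g_0}(-B)=m/c$ with $m=\dim G$. Representing $g$ relative to $g_0$ by the $g_0$-symmetric positive operator $P$ defined via $g(X,Y)=g_0(PX,Y)$, a short computation yields $\trace_g(-B)=\tfrac1c\trace(P^{-1})$, so $C=\tfrac1m\trace(P^{-1})$; the volume hypothesis $\vol(g)\le\vol(g_0)$ translates into $\det P\le1$. I expect the only genuinely delicate point to be extracting the \emph{strict} inequality: applying the AM--GM inequality to the eigenvalues of $P^{-1}$ gives $C=\tfrac1m\trace(P^{-1})\ge(\det P)^{-1/m}\ge1$, and tracking both equality cases shows that $C=1$ would force $P=\Id$, i.e. $g=g_0$. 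Since $g\ne g_0$ by hypothesis, this forces $C>1$, completing the proof.
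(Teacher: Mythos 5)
Your proposal is correct and takes essentially the same route as the paper: the paper likewise forms the trace form $h(X,Y)=-\trace\bigl((\rho_*X\restr V)\circ(\rho_*Y\restr V)\bigr)$, invokes simplicity (and positive definiteness) to write $h=-cB$, and then extracts the strict inequality from the volume bound. Its final step, via the change-of-basis matrix $A$ between $g_0$- and $g$-orthonormal bases and the fact that $\det(A)\ge1$, $A\notin\SO(n)$ force $\|A\|^2>n$, is the same linear algebra as your AM--GM argument, since $\trace(P^{-1})=\|A\|^2$ and $C=\|A\|^2/n$.
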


\begin{proof}
By rescaling $g$ and $g_0$ we can assume without loss of generality
that $g_0$ coincides with $-B$ on the Lie algebra~$\mg$ of~$G$,
where $B$ is the Killing form on~$\mg$.
Define $h:\mg\times\mg\to\R$ by
$$
h(X,Y):=-\trace\bigl(((\rho_*X)\upharpoonright V)\circ((\rho_*Y)\upharpoonright
V)\bigr).
$$
Obviously $h$ is bilinear, symmetric, and $\Ad_G$-invariant. The map
$X\mapsto(\rho_*X)\upharpoonright V$ is
nonzero because $V$ is not a trivial representation space of~$G$.
Since $\mg$ is simple, this map has trivial kernel, which implies
that $h$ is positive-definite; in particular, there
exists some $c>0$ such that $h=-c B$.
We now proceed similarly as in the last part of the proof of
Theorem~\ref{Thm:Main}:
Let $\{X_1,\ldots,X_n\}$ be a $g_0$-orthonormal basis and $\{Y_1,\ldots,Y_n\}$
be a $g$-orthonormal basis of~$\mg$, and define $A=(a_{ij})$ by
$Y_j=\sum_{i=1}^n a_{ij}X_i$ for $j=1,\ldots,n$. By the volume condition,
$|\det(A)|\ge1$; we can assume $\det(A)\ge1$. Moreover, $g\ne g_0$ implies
$A\notin\SO(n)$ and therefore $\|A\|^2>n$. Thus,
\begin{align*}
\trace(\Delta_g\upharpoonright V)
&=\sum_{j=1}^n h(Y_j,Y_j)=-c\sum_{j=1}^n B(Y_j,Y_j)
=-c\sum_{i,j=1}^n a_{ij}^2 B(X_i,X_i) = c\|A\|^2\\
&>c n=\sum_{i=1}^n h(X_i,X_i)=\trace(\Delta_0\upharpoonright V).
\end{align*}
The proposition follows with $C=\frac{\|A\|^2}{n}$.
\end{proof}

\begin{rem}\label{rem:global}
Since volume is a spectral invariant the previous proposition implies the
following:
Let $g_0$ be a bi-invariant metric on a compact simple Lie group~$G$,
and suppose there exists a left-invariant metric~$g\ne g_0$ on~$G$
which is isospectral to~$g_0$.
Then $\trace(\Delta_g\upharpoonright V)>\trace(\Delta_0\upharpoonright V)$
for any finite dimensional invariant subspace~$V$ of~$L^2(G)$ on which
$G$ acts nontrivially.
Thus $\Delta_g$, although isospectral to~$\Delta_0$, must have greater
trace than~$\Delta_0$ on every $\Delta_0$-eigenspace (except for
the eigenvalue~$0$), even on each irreducible subspace.
This is not a priori a contradiction because some wild reordering
of eigenvalues could occur to produce this situation. Nevertheless,
this seems a strong indication in support of the conjecture that a bi-invariant
metric on a compact simple Lie group is globally spectrally determined
among left-invariant metrics.
\end{rem}

\begin{thm}\label{Thm:SimpleLocRigidity}
 Let $g_0$ be a bi-invariant metric on a compact simple Lie group~$G$.
Let $\alpha_1<\alpha_2<\alpha_3$ be three consecutive elements of the eigenvalue
set 
of $(G,g_0)$. {\rm(}See Definition~\ref{def.eig}.{\rm)}
Then there exists
a neighborhood $\mU$ of~$g_0$ in $\MIG$ {\rm(}depending on $\alpha_1$,
$\alpha_2$,
$\alpha_3${\rm)} such that if $g \in \mU$
satisfies $\vol(g) \leq \vol(g_0)$ and if $\alpha_1$, $ \alpha_2$ and $
\alpha_3$ are also consecutive elements of the eigenvalue set of $(G,g)$,
then $g = g_0$.
\end{thm}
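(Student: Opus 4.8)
The plan is to isolate the single $\Delta_0$-eigenspace attached to the \emph{middle} eigenvalue $\alpha_2$ and to show that the consecutiveness hypothesis forces $\Delta_g$ to act on it as the scalar $\alpha_2$; this will make the traces of $\Delta_g$ and $\Delta_0$ agree on that eigenspace, contradicting the strict inequality of Proposition~\ref{prop:simple} unless $g=g_0$. So the three matching eigenvalues are used not to invoke a high-level eigenvalue equivalence, but purely to pin down one eigenspace.

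First I would let $E_2\le L^2(G)$ be the eigenspace of $\Delta_0$ for the eigenvalue $\alpha_2$. Since $g_0$ is bi-invariant, right translations are $g_0$-isometries, so $\Delta_0$ commutes with the right-regular representation and $E_2$ is a finite-dimensional right-invariant subspace; as observed at the start of this section, it is therefore $\Delta_g$-invariant for every $g\in\MIG$. Because $\alpha_2>\alpha_1\ge 0$, the space $E_2$ contains no constants, so $G$ acts nontrivially on it and Proposition~\ref{prop:simple} applies with $V=E_2$. The eigenvalues of the self-adjoint operator $\Delta_g\restr E_2$ depend continuously on $g$ and all equal $\alpha_2$ at $g=g_0$; I would accordingly choose the neighborhood $\mU$ of $g_0$, depending on $\alpha_1,\alpha_2,\alpha_3$ through the spectral gaps about $\alpha_2$, small enough that for every $g\in\mU$ all eigenvalues of $\Delta_g\restr E_2$ lie in the open interval $(\alpha_1,\alpha_3)$.

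Now take $g\in\mU$ with $\vol(g)\le\vol(g_0)$ for which $\alpha_1,\alpha_2,\alpha_3$ are consecutive elements of the eigenvalue set of $(G,g)$; consecutiveness means $\Spec(\Delta_g)\cap(\alpha_1,\alpha_3)=\{\alpha_2\}$. Since $E_2$ is $\Delta_g$-invariant, each eigenvalue of $\Delta_g\restr E_2$ is an eigenvalue of $\Delta_g$ lying, by the previous step, in $(\alpha_1,\alpha_3)$, and hence equals $\alpha_2$; being self-adjoint with all eigenvalues equal to $\alpha_2$, the operator satisfies $\Delta_g\restr E_2=\alpha_2\,\Id_{E_2}=\Delta_0\restr E_2$, so in particular $\trace(\Delta_g\restr E_2)=\trace(\Delta_0\restr E_2)$. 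If $g\ne g_0$, Proposition~\ref{prop:simple} forces $\trace(\Delta_g\restr E_2)=C\,\trace(\Delta_0\restr E_2)$ with $C=C(g)>1$, whence $\trace(\Delta_0\restr E_2)=0$; but $\trace(\Delta_0\restr E_2)=\alpha_2\dim(E_2)>0$, a contradiction. Therefore $g=g_0$. The one genuinely delicate point — and the step I would write out most carefully — is this passage from the three-eigenvalue hypothesis to the scalar identity $\Delta_g\restr E_2=\alpha_2\,\Id$: unlike in Theorem~\ref{Thm:Main} we have no level-$N$ eigenvalue equivalence and so cannot quote Lemma~\ref{Lem:TraceNbhd}, and the substitute is the observation that the middle eigenspace is trapped by continuity strictly between its spectral neighbors, after which consecutiveness leaves $\alpha_2$ as the only admissible eigenvalue there. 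Everything else reduces to continuity of eigenvalues and a direct appeal to Proposition~\ref{prop:simple}.
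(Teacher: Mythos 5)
Your proposal is correct and follows essentially the same route as the paper's own proof: both isolate the $\alpha_2$-eigenspace of $\Delta_0$, use right-invariance and continuity of the eigenvalues of $\Delta_g\restr V$ to trap them in $(\alpha_1,\alpha_3)$, use consecutiveness to conclude $\Delta_g\restr V=\alpha_2\Id_V=\Delta_0\restr V$, and then invoke Proposition~\ref{prop:simple} to force $g=g_0$. The only difference is cosmetic: you spell out the trace contradiction and justify nontriviality of the $G$-action via the absence of constants, where the paper simply notes $\alpha_2\ne 0$ and cites the proposition.
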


\begin{cor}
A bi-invariant metric on a compact simple Lie group is
locally determined within the set of left-invariant metrics of
at most the same volume
by its first two distinct non-zero eigenvalues $ 0 < \lambda_1 < \lambda_2$
{\rm(}ignoring multiplicities{\rm)}.
\end{cor}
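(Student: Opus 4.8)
The plan is to deduce the Corollary from Theorem~\ref{Thm:SimpleLocRigidity}. The constant functions always span the $0$-eigenspace of every Laplacian on $G$, so for any metric $g$ the value $0$ is the smallest element of the eigenvalue set of $(G,g)$. Taking $\alpha_1=0$, $\alpha_2=\lambda_1$ and $\alpha_3=\lambda_2$, the hypothesis that $\lambda_1,\lambda_2$ are the first two distinct nonzero eigenvalues of $\Delta_g$ says exactly that $0,\lambda_1,\lambda_2$ are three consecutive elements of the eigenvalue set of $(G,g)$, so the Corollary is an instance of the Theorem. The real substance is thus Theorem~\ref{Thm:SimpleLocRigidity}, whose proof I would build on Proposition~\ref{prop:simple} together with a spectral-gap stability argument, sketched below.

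For $i=1,2,3$ let $W_i$ denote the $\alpha_i$-eigenspace of $\Delta_0$. Since $g_0$ is bi-invariant, right translations are $g_0$-isometries and commute with $\Delta_0$, so each $W_i$ is invariant under the right-regular representation; as observed at the start of this section, every such subspace is invariant under $\Delta_g$ for each left-invariant $g$. Hence $\Delta_g\restr W_i$ is a well-defined self-adjoint operator, and because $\Delta_0\restr W_i=\alpha_i\Id$, its eigenvalues vary continuously with $g$ and cluster near $\alpha_i$ for $g$ near $g_0$.

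The first key step is to choose the neighborhood $\mU$, together with an $\epsilon>0$ so small that the intervals $(\alpha_i-\epsilon,\alpha_i+\epsilon)$ are pairwise disjoint and meet $\Spec(\Delta_0)$ only in $\{\alpha_i\}$, so that for every $g\in\mU$ the eigenvalues of $\Delta_g$ lying in $(\alpha_1-\epsilon,\alpha_3+\epsilon)$ are \emph{exactly} the eigenvalues of $\Delta_g\restr(W_1\oplus W_2\oplus W_3)$, each block $W_i$ contributing eigenvalues only to its own window $(\alpha_i-\epsilon,\alpha_i+\epsilon)$. The second, decisive step uses the hypothesis that $\alpha_1,\alpha_2,\alpha_3$ are \emph{consecutive} elements of the eigenvalue set of $(G,g)$: this forbids any eigenvalue of $\Delta_g$ in the open gaps $(\alpha_1,\alpha_2)$ and $(\alpha_2,\alpha_3)$. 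Since all eigenvalues of the middle block $\Delta_g\restr W_2$ lie in $(\alpha_2-\epsilon,\alpha_2+\epsilon)\subset(\alpha_1,\alpha_3)$, they are trapped and must all equal $\alpha_2$; hence $\Delta_g\restr W_2=\alpha_2\Id_{W_2}$ and $\trace(\Delta_g\restr W_2)=\alpha_2\dim W_2=\trace(\Delta_0\restr W_2)$.

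I would finish by contradiction. Because $\alpha_2>\alpha_1\ge 0$, the eigenvalue $\alpha_2$ is nonzero, so $W_2$ is a nontrivial invariant subspace and Proposition~\ref{prop:simple} applies: if $g\ne g_0$ and $\vol(g)\le\vol(g_0)$, there is a constant $C=C(g)>1$ with $\trace(\Delta_g\restr W_2)=C\,\trace(\Delta_0\restr W_2)$. Comparing this with the equality just obtained and using $\trace(\Delta_0\restr W_2)=\alpha_2\dim W_2>0$ forces $C=1$, a contradiction; therefore $g=g_0$. The main obstacle is the first step: since $L^2(G)$ is infinite-dimensional, the naive continuity argument controls only finitely many low eigenvalues, and I must rule out high eigenvalues of $\Delta_g$ drifting down into the window $[\alpha_1,\alpha_3]$. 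The clean way to handle this is the uniform two-sided comparison $a(g)\Delta_0\le\Delta_g\le b(g)\Delta_0$ of the two elliptic operators, with $a(g),b(g)\to 1$ as $g\to g_0$; this confines the eigenvalues of $\Delta_g$ on the $\beta$-eigenspace of $\Delta_0$ to $[a(g)\beta,b(g)\beta]$, so that for $g$ close enough to $g_0$ only the blocks $W_1,W_2,W_3$ can contribute to the window and the clustering is uniform over the whole spectrum. The representation-theoretic ingredients (right-invariance of eigenspaces, Schur's lemma) are routine.
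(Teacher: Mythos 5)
Your argument is correct and essentially the paper's: the Corollary is Theorem~\ref{Thm:SimpleLocRigidity} applied with $(\alpha_1,\alpha_2,\alpha_3)=(0,\lambda_1,\lambda_2)$, and your proof of the Theorem itself (continuity traps the eigenvalues of $\Delta_g\restr W_2$ in $(\alpha_1,\alpha_3)$, consecutiveness of $\alpha_1,\alpha_2,\alpha_3$ in the eigenvalue set of $(G,g)$ forces $\Delta_g\restr W_2=\alpha_2\Id_{W_2}$, and Proposition~\ref{prop:simple} then rules out $g\ne g_0$ under the volume bound) is exactly the paper's argument, which works with only the middle eigenspace $W_2$. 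The one remark worth making is that your ``main obstacle'' is not an obstacle at all: since the hypothesis already asserts that $\alpha_1,\alpha_2,\alpha_3$ are consecutive elements of the eigenvalue set of $(G,g)$, any metric whose high eigenvalues drift down into the gaps simply fails the hypothesis (the conclusion is conditional), so the uniform comparison $a(g)\Delta_0\le\Delta_g\le b(g)\Delta_0$ and the claim that $W_1\oplus W_2\oplus W_3$ exhausts the spectral window are superfluous.
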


The corollary follows from the theorem since $0,\lambda_1,\lambda_2$ are three
consecutive elements of the eigenvalue set.

\begin{proof}[Proof of Theorem~\ref{Thm:SimpleLocRigidity}]
Since $\Delta_0$ commutes with right translations in~$G$, the
$\alpha_2$-eigenspace~$V$
of~$\Delta_0$ is invariant under the right-regular representation.
Note that $V$ is finite dimensional.
As remarked in the proof of Lemma~\ref{Lem:TraceNbhd}, the eigenvalues
of $\Delta_g \upharpoonright V$ depend continuously on~$g$.
Thus there exists a neighborhood~$\mU$
of~$g_0$ in $\MIG$ such that for any $g \in \mU$ the eigenvalues of $\Delta_g$
on~$V$ are contained in the interval $(\alpha_1, \alpha_3)$.
Let $g \in\mU$ be a metric which satisfies $\vol(g)\leq\vol(g_0)$ and the
condition
that $\alpha_1, \alpha_2$ and $\alpha_3$ are also consecutive eigenvalues of
$\Delta_g$.
Then necessarily $\Delta_g\upharpoonright
V=\alpha_2\Id_V=\Delta_0\upharpoonright V$.
Finally, note that $G$ acts nontrivially on~$V$ since $\alpha_2\ne0$.
Proposition~\ref{prop:simple} now implies $g=g_0$.
\end{proof}


\bibliographystyle{amsalpha}


\end{document}